\documentclass[14pt]{article}

\usepackage{mathtext}
\usepackage[T1,T2A]{fontenc}
\usepackage[utf8]{inputenc}
\usepackage[english]{babel}

\usepackage{amsfonts, amsmath, amssymb, amsthm, amscd}
\usepackage{mathtools}
\usepackage{xcolor}
\usepackage{ulem}
%\allowdissplaybreaks

\usepackage{geometry}
\geometry{a4paper}

\usepackage{graphicx}
\usepackage{hyperref}
\usepackage{cancel}
\hypersetup{pdfstartview=FitH,  linkcolor=blue,urlcolor=urlcolor, citecolor=blue, colorlinks=true}

\DeclarePairedDelimiter\abs{\lvert}{\rvert}
\newcommand{\norm}[1]{\left\lVert#1\right\rVert}

\DeclareMathOperator{\supp}{supp}

\theoremstyle{theorem}
\newtheorem{theorem}{Theorem}[section]
\theoremstyle{theorem}
\newtheorem{proposition}{Proposition}[section]
\theoremstyle{definition}

\theoremstyle{lemma}
\newtheorem{lemma}{Lemma}[section]
\newcommand{\Label}[1]{\label{#1}}

\usepackage{fancyhdr}
\pagestyle{fancy}

\lhead{}\chead{}\rhead{}
\lfoot{}\cfoot{\thepage}\rfoot{}

\numberwithin{equation}{section}

\title{Fundamental Solution for a New Class of Non-Archimedean Pseudo-Differential Equations}
\author{\textbf{Anatoly N. Kochubei}\\
\footnotesize Institute of Mathematics,\\
\footnotesize National Academy of Sciences of Ukraine,\\
\footnotesize Tereshchenkivska 3, Kyiv, 01024 Ukraine,\\
\footnotesize E-mail: kochubei@imath.kiev.ua
\and
\textbf{Mariia V. Serdiuk}\\
\footnotesize Institute of Mathematics,\\
\footnotesize National Academy of Sciences of Ukraine,\\
\footnotesize Tereshchenkivska 3, Kyiv, 01024 Ukraine,\\
\footnotesize E-mail: mariia.v.serdiuk@gmail.com }

\begin{document}
\maketitle

\section{Introduction}

In the analysis of real- and complex-valued functions over a non-Archimedean field of $p$-adic numbers $\mathbb{Q}_p$ and the spaces $\mathbb{Q}_p^n$, a typical linear operator is the pseudo-differential one,
$$(Au)(x_1,\dots,x_n)=\mathcal{F}^{-1}\left[a(\xi_1,\dots,\xi_n)(\mathcal{F}u)(\xi_1,\dots,\xi_n)\right](x_1,\dots,x_n),\;x_1,\dots,x_n\in\mathbb{Q}_p,$$
where $\mathcal{F}$ is the Fourier transform. Under some conditions upon the symbol, the Fourier transform can be eliminated, and we get a representation of $A$ as a hypersingular integral operator.

The example, which has been investigated in the greatest detail, is the Vladimirov-Taibleson operator $D^{\alpha,n}$, for which
$$a(\xi_1,\dots,\xi_n)=\left(\max\left\{|\xi_1|_p,\dots,|\xi_n|_p\right\}\right)^{\alpha},\;\alpha>0,$$
and which can be seen as a kind of an elliptic operator. See \cite{VVZ,K2023}; see also \cite{Z,G-C} for equations with various related operators, such as analogs of the nonstationary Schrödinger equation, the Klein-Gordon equation, the operator with the symbol $\max\left\{|\xi|_p^{d_1},|\xi|_p^{d_2}\right\}^{\alpha}$, and others.

Note that the maximum signs in the above symbols correspond to the ultrametric geometry of the non-Archimedean spaces $\mathbb{Q}_p^n$. See Section 2 for the details.

Interesting and non-trivial equations of this theory are not necessarily formulated directly in terms of the above operators. Thus, $p$-adic parabolic equations are those with a real positive time variable and $p$-adic spatial variables; this class of equations is important for probabilistic applications. A kind of $p$-adic wave equation \cite{K2008} has the form $D_t^{\alpha}F-D_x^{\alpha,n}F=0$.

In this paper, we consider a more general equation
$$D_t^{\alpha}u(t,x)-D_x^{\beta,n}u(t,x)=0,\;\beta=K\alpha\mbox{ for some }K\in\mathbb{N}.$$
Using some techniques from \cite{K2008}, especially the idea of a radial time variable (see also \cite{K2014}), as well as the Lizorkin classes of test functions and distributions, we prove the existence and uniqueness results for the Cauchy problem, including the finite dependence property (resembling classical hyperbolic equations) and an $L^1$-estimate of the solution.

\section{Preliminaries}

\subsection{The Field of $p$-adic Numbers}

Let $p$ be a prime number. The $p$-adic norm on the field $\mathbb{Q}$ of rational numbers is defined as
$$|0|_p=0,\;|x|_p=p^{-\gamma(x)},$$
where the integer $\gamma(x)$ is obtained from the decomposition
$$x=p^{\gamma}\frac{m}{n},\;m,n,\gamma=\gamma(x)\in\mathbb{Z},$$
and the integers $m$, $n$ are prime to $p$. This norm is non-Archimedean and satisfies the strong triangle inequality $|x+y|_p\leq\max\left(|x|_p,|y|_p\right)$. The completion of the field $\mathbb{Q}$ with respect to the $p$-adic norm is called the field of $p$-adic numbers $\mathbb{Q}_p$.

Any $p$-adic number $x\neq0$ has a unique canonical representation
\begin{equation}\Label{000}
x=p^{\gamma}(x_0+x_1p+x_2p^2+\dots),
\end{equation}
where $\gamma=\gamma(x)\in\mathbb{Z}$, $x_j$ are integers such that $0\leq x_j\leq p-1$, $x_0>0$, $j=0,1,\dots$, and $|x|_p=p^{-\gamma}$.

The fractional part of a $p$-adic number $x\in\mathbb{Q}_p$ with the canonical representation \eqref{000} is defined as
\begin{equation}\Label{001}
\{x\}_p=\left\{
\begin{array}{ll}
0,\;&\mbox{if }\gamma(x)\geq0\mbox{ or }x=0;\\
p^{\gamma}(x_0+x_1p+\dots+x_{|\gamma|-1}p^{|\gamma-1|}),\;&\mbox{if }\gamma(x)<0.\\
\end{array}
\right.
\end{equation}

A function $\varphi:\mathbb{Q}_p\to\mathbb{C}$ is called {\it locally constant} if there exists such an integer $\ell$ that for any $x\in\mathbb{Q}_p$
$$\varphi(x+x')=\varphi(x),\mbox{ whenever }|x'|_p\leq p^{-\ell}.$$
The smallest of such numbers $\ell$ is called the exponent of local constancy of the function $\varphi$.

We denote by $\mathcal{D}(\mathbb{Q}_p)$ the linear topological space of all locally constant functions $\varphi:\mathbb{Q}_p\to\mathbb{C}$ with compact supports. The strong conjugate space $\mathcal{D}'(\mathbb{Q}_p)$ is called the space of Bruhat-Schwartz distributions (see \cite{A}).

\subsection{Fourier Transform}

Let $\chi_p(\xi x)=\exp(2\pi i\{\xi x\}_p)$, where $\xi\in\mathbb{Q}_p$, be an additive character of $\mathbb{Q}_p$. The Fourier transform of a test function $\varphi\in\mathcal{D}(\mathbb{Q}_p)$ is defined as
$$\widetilde{\varphi}(\xi)=\mathcal{F}_{x\to\xi}[\varphi](\xi)=\int\limits_{\mathbb{Q}_p}\chi_p(\xi x)\varphi(x)dx,\;\xi\in\mathbb{Q}_p,$$
where the integration is with respect to the standard Haar measure $dx$ on $\mathbb{Q}_p$, i.e. the invariant under shifts positive measure $dx$ such that $\int_{|x|_p\leq1}=1$.

For $n\geq1$, the Fourier transform of a test function $\varphi\in\mathcal{D}(\mathbb{Q}_p^n)$ is defined as
$$\widetilde{\varphi}(\xi)=\mathcal{F}_{x\to\xi}[\varphi](\xi)=\int\limits_{\mathbb{Q}_p^n}\chi_p(\xi\cdot x)\varphi(x)dx,\;\xi\in\mathbb{Q}_p^n,$$
where $\chi_p(\xi\cdot x)=\chi_p(\xi_1 x_1)\cdots\chi_p(\xi_n x_n)=\exp\left(2\pi i\sum\limits_{j=1}^n\{\xi_j x_j\}_p\right)$, $\xi\cdot x$ is the scalar product of vectors $\xi=(\xi_1,\dots,\xi_n)$ and $x=(x_1,\dots,x_n)$, and the function $\chi_p(\xi_jx_j)=\exp\left(2\pi i\{\xi_j x_j\}_p\right)$ for every fixed $\xi_j\in\mathbb{Q}_p$ is an additive character of the field $\mathbb{Q}_p$, $\{\xi_jx_j\}_p$ is the fractional part of a number $\xi_jx_j$ defined by \eqref{001}, $j=1,\dots,n$.

The Fourier transform $\varphi\mapsto\widetilde{\varphi}$ is a linear isomorphism from $\mathcal{D}(\mathbb{Q}_p)$ on $\mathcal{D}(\mathbb{Q}_p)$ and the inversion formula holds
$$\varphi(x)=\int\limits_{\mathbb{Q}_p}\chi_p(-x\xi)\widetilde{\varphi}(\xi)d\xi=\mathcal{F}_{\xi\to x}\left[\widetilde{\varphi}(-x)\right]=\mathcal{F}_{\xi\to x}^{-1}\left[\widetilde{\varphi}\right](x),\;x\in\mathbb{Q}_p,\;\varphi\in\mathcal{D}(\mathbb{Q}_p).$$

For a distribution $f\in\mathcal{D}'(\mathbb{Q}_p)$ its Fourier transform is defined by the formula
$$(\widetilde{f},\varphi)=(f,\widetilde{\varphi}),\;\varphi\in\mathcal{D}(\mathbb{Q}_p),$$
and the inversion formula holds
$$f=\mathcal{F}\left[\widetilde{f}(-\xi)\right],\;f\in\mathcal{D}'(\mathbb{Q}_p).$$

Similarly, for $f\in\mathcal{D}'(\mathbb{Q}_p^n)$,
$$(\widetilde{f},\varphi)=(f,\widetilde{\varphi}),\;\varphi\in\mathcal{D}(\mathbb{Q}_p^n).$$

We will use the following subspaces of $\mathcal{D}(\mathbb{Q}_p^n)$,
$$\Psi(\mathbb{Q}_p^n)=\{\psi\in\mathcal{D}(\mathbb{Q}_p^n):\;\psi(0)=0\},$$
$$\Phi(\mathbb{Q}_p^n)=\bigg\{\varphi\in\mathcal{D}(\mathbb{Q}_p^n):\int\limits_{\mathbb{Q}_p^n}\varphi(x)dx=0\;\bigg\},$$
introduced in \cite{A}. The space $\Phi(\mathbb{Q}_p^n)$ is called the Lizorkin space of test functions of the second kind. The conjugate space $\Phi'(\mathbb{Q}_p^n)$ is called the Lizorkin space of distributions of the second kind. The most important property of these spaces is that the Fourier transform $\mathcal{F}$ is a linear isomorphism from $\Psi(\mathbb{Q}_p^n)$ onto $\Phi(\mathbb{Q}_p^n)$, thus also from $\Phi'(\mathbb{Q}_p^n)$ onto $\Psi'(\mathbb{Q}_p^n)$. At the same time, $\mathcal{F}$ can be considered as a linear isomorphism from $\Phi(\mathbb{Q}_p^n)$ to $\Psi(\mathbb{Q}_p^n)$.

\subsection{Operator $D^{\alpha}$}

On a test function $\varphi\in\mathcal{D}(\mathbb{Q}_p)$, the fractional differentiation operator $D^{\alpha}$, $\alpha>0$, is defined as (see \cite{VVZ})
\begin{equation}\Label{1-6}
(D^{\alpha}\varphi)(x)=\mathcal{F}_{\xi\to x}^{-1}\big[|\xi|_p^{\alpha}(\mathcal{F}_{y\to\xi}(\varphi))(\xi)\big](x).
\end{equation}

For $x=(x_1,\dots,x_n)\in\mathbb{Q}_p^n$, define $|x|_p=\max\limits_{1\leq j\leq n}|x_j|_p$. The operator $D^{\alpha,n}$ on a function $\varphi\in\mathcal{D}(\mathbb{Q}_p^n)$ is given by the expression
$$(D^{\alpha,n}\varphi)(x)=\mathcal{F}_{\xi\to x}^{-1}\big[|\xi|_p^{\alpha}(\mathcal{F}_{y\to\xi}(\varphi))(\xi)\big](x).$$

The operator $D^{\alpha}$ can also be represented as a hypersingular integral operator. For a function $u\in\mathcal{D}(\mathbb{Q}_p)$,
\begin{equation}\Label{1-7}
(D^{\alpha}u)(x)=\frac{1-p^{\alpha}}{1-p^{-\alpha-1}}\int\limits_{\mathbb{Q}_p}|y|_p^{-\alpha-1}\left[u(x-y)-u(x)\right]dy.
\end{equation}

Similarly, if $u\in\mathcal{D}(\mathbb{Q}_p^n)$, then
$$(D^{\alpha,n}u)(x)=\frac{1-p^{\alpha}}{1-p^{-\alpha-n}}\int\limits_{\mathbb{Q}_p^n}|y|_p^{-\alpha-n}\left[u(x-y)-u(x)\right]dy.$$

Further, we shall use the following integration formulas \cite{VVZ}:
\begin{align}
\Label{0-44}
&\int\limits_{B_{\gamma}^n}d^nx=p^{n\gamma};\;\int\limits_{S_{\gamma}^n}dx=\left(1-p^{-n}\right)p^{n\gamma},\\
\Label{3.1}
&\int\limits_{B_{\gamma}^n}\chi_p(\xi x)dx=
\left\{
\begin{array}{ll}
p^{n\gamma},\;&|\xi|_p\leq p^{-\gamma};\\
0,\;&|\xi|_p\geq p^{-\gamma+1}.
\end{array}
\right.\\
\Label{3.2}
&\int\limits_{S_{\gamma}^n}\chi_p(\xi x)dx=
\left\{
\begin{array}{ll}
p^{n\gamma}(1-p^{-n}),\;&|\xi|_p\leq p^{-\gamma};\\
-p^{n(\gamma-1)},\;&|\xi|_p=p^{-\gamma+1};\\
0,\;&|\xi|_p\geq p^{-\gamma+2}.
\end{array}
\right.
\end{align}
where we denote by $B_\gamma^n\coloneqq\{x\in\mathbb{Q}_p^n:\;|x|_p\leq p^{\gamma}\}$, $S_{\gamma}^n\coloneqq\{x\in\mathbb{Q}_p^n:\;|x|_p=p^{\gamma}\}$.

\section{Radial Eigenfunctions}

Let $\alpha>0$, $\beta>0$. Consider the eigenvalue problem
\begin{equation}\Label{3.1'}
D^{\alpha}u=\lambda{u},\;\lambda=p^{\beta N},\;N\in\mathbb{Z},
\end{equation}
where $u:\mathbb{Q}_p\to\mathbb{C}$ is not identically zero.

We also suppose that
\begin{equation}\Label{2.5}
\beta=K\alpha\mbox{ for some }K\in\mathbb{N}.
\end{equation}

\begin{proposition}\Label{Pr1}
If the condition \eqref{2.5} holds, the equation \eqref{3.1'} has the set of solutions in $\Phi(\mathbb{Q}_p)$ of the following form for $N\in\mathbb{Z}$:
\begin{equation}\Label{3.12}
u_N(t)=\left\{
\begin{array}{ll}
C_Np^{KN}(1-\dfrac{1}{p}),\;&|t|_p\leq p^{-KN};\\
-C_Np^{KN-1},\;&|t|_p=p^{-KN+1};\\
0,\;&|t|_p\geq p^{-KN+2}.
\end{array}
\right.
\end{equation}
\end{proposition}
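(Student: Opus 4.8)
The plan is to move to the Fourier side, where $D^{\alpha}$ becomes multiplication by the symbol $|\xi|_p^{\alpha}$, and then to read off the admissible support of $\widetilde{u}$ directly from the eigenvalue relation. Applying $\mathcal{F}$ to \eqref{3.1'} and using the definition \eqref{1-6} of $D^{\alpha}$, the equation $D^{\alpha}u=\lambda u$ becomes the identity
$$\big(|\xi|_p^{\alpha}-\lambda\big)\,\widetilde{u}(\xi)=0,\qquad\xi\in\mathbb{Q}_p.$$
For $u\in\Phi(\mathbb{Q}_p)$ the transform $\widetilde{u}$ lies in $\Psi(\mathbb{Q}_p)$, i.e. it is locally constant, compactly supported, and vanishes at the origin, so this relation holds as an equality of functions, not merely of distributions.

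First I would analyze the zero set of the factor $|\xi|_p^{\alpha}-\lambda$. Since $|\xi|_p$ takes only the values $p^{m}$, $m\in\mathbb{Z}$, one has $|\xi|_p^{\alpha}=p^{\alpha m}$, and the factor vanishes exactly when $p^{\alpha m}=\lambda=p^{\beta N}$, i.e. when $\alpha m=\beta N$ for some integer $m$. This is solvable precisely because of hypothesis \eqref{2.5}: with $\beta=K\alpha$ we get $m=KN\in\mathbb{Z}$, so the zero set reduces to the single sphere $S_{KN}=\{|\xi|_p=p^{KN}\}$. This is exactly the role of condition \eqref{2.5}; without the integrality of $\beta N/\alpha$ the level set $\{|\xi|_p^{\alpha}=\lambda\}$ would be empty and only the trivial solution would exist. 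Hence $\widetilde{u}$ is supported on $S_{KN}$. Since we seek radial eigenfunctions, whose Fourier transforms are again radial and therefore constant on each sphere, this forces $\widetilde{u}(\xi)=C_N\,\mathbf{1}_{S_{KN}}(\xi)$ for some constant $C_N$; note $\mathbf{1}_{S_{KN}}\in\Psi(\mathbb{Q}_p)$ automatically, as $0\notin S_{KN}$.

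Finally I would recover $u_N$ by the inversion formula,
$$u_N(t)=\int_{\mathbb{Q}_p}\chi_p(-t\xi)\,\widetilde{u}(\xi)\,d\xi=C_N\int_{S_{KN}}\chi_p(-t\xi)\,d\xi,$$
and evaluate the last integral with the sphere formula \eqref{3.2} in dimension $n=1$, taking $\gamma=KN$ and reading $-t$ as the frequency argument. Because $|-t|_p=|t|_p$, the three regimes $|t|_p\le p^{-KN}$, $|t|_p=p^{-KN+1}$, and $|t|_p\ge p^{-KN+2}$ yield the values $p^{KN}(1-p^{-1})$, $-p^{KN-1}$, and $0$ respectively, which is precisely \eqref{3.12}.

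I expect the only delicate point to be the reduction to a single sphere together with the constancy of $\widetilde{u}$ there: it relies on the symbol $|\xi|_p^{\alpha}$ being locally constant with values in $p^{\alpha\mathbb{Z}}$, on \eqref{2.5} being exactly the arithmetic condition that makes $\{|\xi|_p^{\alpha}=\lambda\}$ nonempty, and on the radiality restriction that pins $\widetilde{u}$ to a constant multiple of the indicator. The remaining computation is a direct application of \eqref{3.2}.
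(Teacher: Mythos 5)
Your proposal is correct and follows essentially the same route as the paper's own proof: pass to the Fourier side where the equation becomes $(|\xi|_p^{\alpha}-p^{\beta N})\widetilde{u}(\xi)=0$, use condition \eqref{2.5} to identify the support of $\widetilde{u}$ as the single sphere $S_{KN}$, invoke radiality to conclude $\widetilde{u}=C_N\mathbf{1}_{S_{KN}}$, and recover $u_N$ via the inversion formula and the sphere integral \eqref{3.2}. The only cosmetic difference is that you justify $u_N\in\Phi(\mathbb{Q}_p)$ via the isomorphism $\mathcal{F}\colon\Psi\to\Phi$ and the fact that $0\notin S_{KN}$, whereas the paper verifies this membership directly from the explicit formula; both are valid.
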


\begin{proof}
We apply the Fourier transform with respect to the both sides of the equation \eqref{3.1'} and obtain
\begin{equation}\Label{9}
(|\eta|_p^{\alpha}-p^{\beta N})(\mathcal{F}_{t\to\eta}u)(\eta)=0,\mbox{for all }\eta\in\mathbb{Q}_p.
\end{equation}
Denote $\widetilde{u}(\eta)=(\mathcal{F}_{t\to\eta}u)(\eta)$, $(\eta)\in\mathbb{Q}_p$. It follows from \eqref{9} that the equality $\widetilde{u}(\eta)\equiv0$ will not hold only if the condition $|\eta|_p^{\alpha}=p^{\beta N}$ holds, i.e. $|\eta|_p=p^{KN}$, where $K\in\mathbb{N}$ is such that $\beta=K\alpha$. This condition holds by the assumption \eqref{2.5}. (Note that if \eqref{2.5} does not hold, then the equation \eqref{3.1'} has only a zero solution.)

Since $u$ is a radial function with respect to $t$, then $\widetilde{u}$ is also a radial function with respect to $\eta$ (see \cite{K2001} formula (1.28), p.15). Therefore
\begin{equation}\Label{11}
\widetilde{u}_N(|\eta|_p)=\left\{
\begin{array}{ll}
C_N,\;&|\eta|=p^{KN};\\
0,\;&|\eta|\neq p^{KN},
\end{array}
\right.
\;C_N\neq0.
\end{equation}
By the Fourier inversion formula and the well-known integration formula \eqref{3.2}, we have
\begin{equation}\Label{12}
\begin{split}
u_N(|t|_p)=(\mathcal{F}_{\eta\to t}^{-1}\widetilde{u}_N)(t)=\int\limits_{\mathbb{Q}_p}\chi_p(-t\eta)\widetilde{u}_N(\eta)d\eta=\int\limits_{S_{KN}}\chi_p(-t\eta)C_Nd\eta=\\
=\left\{
\begin{array}{ll}
C_N(\xi)p^{KN}(1-\dfrac{1}{p}),\;&|t|_p\leq p^{-KN};\\
-C_N(\xi)p^{KN-1},\;&|t|_p=p^{-KN+1};\\
0,\;&|t|_p\geq p^{-KN+2}.
\end{array}
\right.
\end{split}
\end{equation}

It is easy to verify from \eqref{12} that $u_N\in\Phi(\mathbb{Q}_p)$ for each $N\in\mathbb{Z}$.
\end{proof}

\begin{proposition}\Label{Pr2}
If a radial distribution $u\in\Phi'(\mathbb{Q}_p)$ satisfies the equation \eqref{3.1'}, then it coincides, for some $C_N\in\mathbb{C}$, with the function \eqref{3.12}.
\end{proposition}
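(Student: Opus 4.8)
The plan is to repeat the computation from the proof of Proposition \ref{Pr1}, but to carry out every step in the distributional sense on the Lizorkin spaces, so as to show that \eqref{11} is not merely one admissible form of $\widetilde{u}$ but, up to the constant $C_N$, the only one. First I would apply $\mathcal{F}$ to the identity $D^{\alpha}u=p^{\beta N}u$, now regarded as an equality in $\Phi'(\mathbb{Q}_p)$. Since $\mathcal{F}$ is an isomorphism of $\Phi'(\mathbb{Q}_p)$ onto $\Psi'(\mathbb{Q}_p)$ and, by the definition \eqref{1-6} extended to the Lizorkin classes, $D^{\alpha}$ corresponds under $\mathcal{F}$ to multiplication of $\widetilde{u}=\mathcal{F}u$ by the symbol $|\eta|_p^{\alpha}$, the equation becomes
\[
\big(|\eta|_p^{\alpha}-p^{\beta N}\big)\widetilde{u}=0\quad\text{in }\Psi'(\mathbb{Q}_p).
\]
Here the multiplier $m(\eta)=|\eta|_p^{\alpha}-p^{\beta N}$ maps $\Psi(\mathbb{Q}_p)$ into itself (indeed $m(0)\neq0$, while every $\psi\in\Psi(\mathbb{Q}_p)$ vanishes in a neighbourhood of the origin), so the product $m\widetilde{u}$ is a well-defined element of $\Psi'(\mathbb{Q}_p)$.

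Next I would localise the support of $\widetilde{u}$. By \eqref{2.5} one has $m(\eta)=0$ exactly when $|\eta|_p=p^{KN}$, that is, on the sphere $S_{KN}$, and off this sphere $m$ is locally constant and nowhere zero, so $1/m$ is locally constant there. Given any $\phi\in\Psi(\mathbb{Q}_p)$ whose support is disjoint from $S_{KN}$, the function $\phi/m$ again belongs to $\Psi(\mathbb{Q}_p)$, and therefore
\[
(\widetilde{u},\phi)=\big(\widetilde{u},m\cdot(\phi/m)\big)=(m\widetilde{u},\phi/m)=0 .
\]
Hence $\supp\widetilde{u}\subseteq S_{KN}$. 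It then remains to show that a radial distribution supported on this single sphere is a constant multiple of its indicator. Since $u$ is radial, so is $\widetilde{u}$ (see \cite{K2001}); a radial distribution is determined by its action on radial test functions, every radial $\psi$ is constant on each sphere, and in view of the support just established only its value on $S_{KN}$ contributes. This forces $(\widetilde{u},\psi)=C_N\int_{S_{KN}}\psi\,d\eta$ for some $C_N\in\mathbb{C}$, i.e. $\widetilde{u}$ coincides with \eqref{11}. Applying $\mathcal{F}^{-1}$ together with the integration formula \eqref{3.2}, exactly as in \eqref{12}, then recovers $u=u_N$ in the form \eqref{3.12}.

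The step I expect to be the main obstacle is the passage from ``supported on $S_{KN}$ and radial'' to ``a single constant times the indicator of $S_{KN}$''. Because $S_{KN}$ is clopen, a distribution supported there is merely a continuous functional on the locally constant functions on $S_{KN}$, an a priori large space (one could prescribe unrelated values on distinct sub-balls of $S_{KN}$); it is precisely the radial symmetry that removes these extra degrees of freedom. I would make this rigorous by noting that radiality is the invariance of $\widetilde{u}$ under the multiplicative action of the units $\{|\varepsilon|_p=1\}$, which acts on $S_{KN}=p^{-KN}\mathbb{Z}_p^{*}$ as the regular action of the compact group $\mathbb{Z}_p^{*}$; an invariant distribution for a regular group action is proportional to Haar measure, which yields the single constant $C_N$ and hence \eqref{11}. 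A minor auxiliary point, already handled above, is the legitimacy of the distributional multiplication $m\widetilde{u}$ and of the division $\phi/m$, both valid because $m$ and $1/m$ are locally constant off $S_{KN}$ and $m$ does not vanish at the origin.
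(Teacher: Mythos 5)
Your proof is correct and follows essentially the same route as the paper's (which simply defers to Proposition 1 of \cite{K2008}): apply $\mathcal{F}$ on the Lizorkin spaces to get $\bigl(|\eta|_p^{\alpha}-p^{\beta N}\bigr)\widetilde{u}=0$ in $\Psi'(\mathbb{Q}_p)$, divide by the symbol off the sphere $S_{KN}$ to localize $\supp\widetilde{u}$ there, use radiality to conclude $\widetilde{u}$ is a constant multiple of the indicator of $S_{KN}$, and invert via \eqref{3.2} to recover \eqref{3.12}. Your unit-group/Haar-measure formalization of the last step is just the standard radialization (spherical averaging) argument in group-theoretic dress, made rigorous here exactly as you say because local constancy reduces the average over $\mathbb{Z}_p^{*}$ to a finite sum.
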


\begin{proof}
The proof is identical to the one given for Proposition 1 in \cite{K2008}.
\end{proof}

\section{Cauchy Problem}

Let $\alpha>0$, $\beta>0$, $n\geq1$. We consider the Cauchy problem
\begin{equation}\Label{1}
D_{|t|_p}^{\alpha}u(|t|_p,x)-D_x^{\beta,n}u(|t|_p,x)=0,\;(t,x)\in\mathbb{Q}_p\times\mathbb{Q}_p^n,
\end{equation}
\begin{equation}\Label{2}
u(0,x)=u_0(x),\;x\in\mathbb{Q}_p^n,
\end{equation}
where $u:\mathbb{Q}_p\times\mathbb{Q}_p^n\to\mathbb{C}$ is a radial function with respect to $t$.

\begin{theorem}\Label{th1}
Let $\alpha>0$, $\beta>0$ such that the condition \eqref{2.5} holds. Suppose that the function $u_0$ is in $\Phi(\mathbb{Q}_p^n)$. Then the Cauchy problem \eqref{1}-\eqref{2} has a solution $u=u(|t|_p,x)$, radial in $t$, that belongs to the space $\Phi(\mathbb{Q}_p)$ for each $x\in\mathbb{Q}_p^n$, and belongs to $\Phi(\mathbb{Q}_p^n)$ for each $t\in\mathbb{Q}_p$.

If the condition \eqref{2.5} does not hold, then the equation \eqref{1} has only a zero solution $u(t,x)\equiv0$, $(t,x)\in\mathbb{Q}_p\times\mathbb{Q}_p^n$.
\end{theorem}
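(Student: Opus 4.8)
The plan is to pass to the Fourier transform in the spatial variable $x$, which turns $D_x^{\beta,n}$ into multiplication by $|\xi|_p^{\beta}$ and thereby reduces \eqref{1} to a $\xi$-parametrized family of eigenvalue problems in the radial time variable, each of exactly the form \eqref{3.1'} treated in Propositions \ref{Pr1} and \ref{Pr2}. Writing $\widehat{u}(|t|_p,\xi)=\mathcal{F}_{x\to\xi}[u(|t|_p,\cdot)](\xi)$ and using $\mathcal{F}_{x\to\xi}\big(D_x^{\beta,n}u\big)=|\xi|_p^{\beta}\widehat{u}$, equation \eqref{1} becomes, for each fixed $\xi\neq0$ with $|\xi|_p=p^{N}$, $N\in\mathbb{Z}$,
$$D_{|t|_p}^{\alpha}\widehat{u}(|t|_p,\xi)=|\xi|_p^{\beta}\,\widehat{u}(|t|_p,\xi)=p^{\beta N}\widehat{u}(|t|_p,\xi).$$
Under \eqref{2.5} this is precisely \eqref{3.1'}, so by Propositions \ref{Pr1} and \ref{Pr2} the radial-in-$t$ function $\widehat{u}(\cdot,\xi)$ must coincide with \eqref{3.12} for a coefficient $C_N=C(\xi)$ that is constant on each sphere $|\xi|_p=p^{N}$.

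This coefficient is fixed by the Cauchy data. Since $|0|_p=0\leq p^{-KN}$, evaluating \eqref{3.12} at $t=0$ gives $\widehat{u}(0,\xi)=C(\xi)\,p^{KN}(1-p^{-1})=(1-p^{-1})|\xi|_p^{K}C(\xi)$, and matching with $\widehat{u}(0,\xi)=\widetilde{u}_0(\xi)$ yields
$$C(\xi)=\frac{\widetilde{u}_0(\xi)}{(1-p^{-1})\,|\xi|_p^{K}}.$$
Because $u_0\in\Phi(\mathbb{Q}_p^n)$, its transform $\widetilde{u}_0\in\Psi(\mathbb{Q}_p^n)$ is locally constant, compactly supported, and vanishes in a neighbourhood of the origin; hence the factor $|\xi|_p^{-K}$ introduces no singularity and $C(\xi)$ is again an element of $\Psi(\mathbb{Q}_p^n)$. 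Substituting $C(\xi)$ back into \eqref{3.12} and applying $\mathcal{F}_{\xi\to x}^{-1}$ produces the candidate solution $u(|t|_p,x)$ explicitly.

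It remains to verify the membership claims together with \eqref{1} and \eqref{2}. For fixed $x$, only the finitely many spheres $|\xi|_p=p^{N}$ meeting $\supp\widetilde{u}_0$ contribute, so $u(\cdot,x)$ is a finite linear combination of the eigenfunctions \eqref{3.12}; by Proposition \ref{Pr1} each of these lies in $\Phi(\mathbb{Q}_p)$, hence so does the sum. For fixed $t$, the $\xi$-function $\widehat{u}(|t|_p,\xi)$ equals $\widetilde{u}_0(\xi)$ times a multiplier that is constant on spheres and depends on $|t|_p$, so it inherits from $\widetilde{u}_0$ the defining properties of $\Psi(\mathbb{Q}_p^n)$, and its inverse transform lies in $\Phi(\mathbb{Q}_p^n)$. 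Finally, since each $\widehat{u}(\cdot,\xi)$ solves the eigenvalue equation, $D_{|t|_p}^{\alpha}\widehat{u}-|\xi|_p^{\beta}\widehat{u}=0$; inverting in $\xi$ recovers \eqref{1}, while the choice of $C(\xi)$ gives \eqref{2}. I expect the main obstacle to be not the explicit computations but the rigorous justification, within the Lizorkin distribution framework, of the commutation of $D_x^{\beta,n}$ and $D_{|t|_p}^{\alpha}$ with the partial Fourier transform and of the preservation of the radial-in-$t$ structure.

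For the last assertion I would transform \eqref{1} in both variables: with $w=\mathcal{F}_{t\to\eta}\mathcal{F}_{x\to\xi}u$ the equation becomes $\big(|\eta|_p^{\alpha}-|\xi|_p^{\beta}\big)w(\eta,\xi)=0$, so $w$ is supported on $\{|\eta|_p^{\alpha}=|\xi|_p^{\beta}\}$. On a sphere $|\xi|_p=p^{N}$ this requires $|\eta|_p=p^{\beta N/\alpha}$, an admissible $p$-adic absolute value only when $\beta N/\alpha\in\mathbb{Z}$. When \eqref{2.5} holds this is satisfied for every $N$ and reproduces the construction above; when $\beta/\alpha\notin\mathbb{N}$ it fails for every $N$ outside a thin exceptional set, so that any solution compatible with data spread over all spheres must have $w\equiv0$, whence $u\equiv0$. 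The delicate point of this part is precisely the treatment of the exceptional spheres (in particular $N=0$), which must be excluded within the radial Lizorkin class to secure the stated conclusion.
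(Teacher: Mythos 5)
Your construction under \eqref{2.5} is, step for step, the paper's own proof: the partial Fourier transform $\mathcal{F}_{x\to\xi}$ reduces \eqref{1} to the radial eigenvalue problem \eqref{8} handled by Propositions \ref{Pr1} and \ref{Pr2}, the initial condition forces $C(\xi)=p^{-KN}\bigl(1-\tfrac{1}{p}\bigr)^{-1}\widetilde{u}_0(\xi)$ exactly as in the paper, and inversion gives the solution; the paper merely packages the sphere-wise multiplier as the function $b$ of \eqref{15}--\eqref{16} and writes the answer as the convolution \eqref{17}, while your membership checks (finite sum of eigenfunctions in $t$; $\widetilde{u}_0$ times a sphere-wise constant multiplier in $\xi$) are the same as the paper's. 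One small slip: $C(\xi)$ is \emph{not} constant on spheres --- it equals $\widetilde{u}_0(\xi)$ times a sphere-wise constant --- but your displayed formula for it is correct, so nothing is affected.

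On the second assertion, be aware that the paper's entire proof is the sentence that it ``is clear from Propositions \ref{Pr1} and \ref{Pr2}''; your two-variable Fourier argument is that idea made explicit, and the ``exceptional spheres'' you flag are not a technicality you failed to finish --- they are a genuine obstruction that the paper glosses over. The sphere $|\xi|_p=1$ cannot be excluded: there $|\eta|_p^{\alpha}=|\xi|_p^{\beta}=1$ is solvable by $|\eta|_p=1$ for \emph{every} $\alpha,\beta>0$. Concretely, let $w$ be the eigenfunction \eqref{3.12} with $N=0$ (its Fourier transform is supported on $|\eta|_p=1$, so $D^{\alpha}_{|t|_p}w=w$ for any $\alpha>0$), and let $v=\mathcal{F}^{-1}_{\xi\to x}\psi$ with $\psi\in\mathcal{D}(\mathbb{Q}_p^n)$ supported on $S_0^n$ (so $D^{\beta,n}_xv=v$ for any $\beta>0$). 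Then $u(t,x)=w(|t|_p)v(x)$ is a nonzero solution of \eqref{1}, radial in $t$, lying in $\Phi(\mathbb{Q}_p)$ in $t$ and in $\Phi(\mathbb{Q}_p^n)$ in $x$, with no condition whatsoever on $\beta/\alpha$. So the claim ``only a zero solution'' is false as literally stated; the correct statement is only that, when \eqref{2.5} fails, any solution must have $\widehat{u}(t,\cdot)$ supported on the spheres $|\xi|_p=p^{N}$ with $\beta N/\alpha\in\mathbb{Z}$, so the Cauchy problem is unsolvable unless $\widetilde{u}_0$ is supported there. Your hedge therefore identifies a flaw in the theorem itself rather than a gap in your argument; neither your proof nor the paper's establishes the second assertion.
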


\begin{proof}
Suppose that $u_0\in\Phi(\mathbb{Q}_p^n)$. We will look for a solution belonging to $\Phi(\mathbb{Q}_p)$ and radial in $t$, for each $x\in\mathbb{Q}_p^n$, and belonging to $\Phi(\mathbb{Q}_p^n)$ in $x$, for each $t\in\mathbb{Q}_p$.

Let us consider the Cauchy problem \eqref{1}-\eqref{2}. We take the Fourier transform with respect to $x$ on both sides of \eqref{1} and obtain
\begin{equation}\Label{3}
\mathcal{F}_{x\to \xi}D^{\alpha}_{|t|_p}u(|t|_p,\xi)-|\xi|_p^{\beta}\mathcal{F}_{x\to\xi}u(|t|_p,\xi)=0,\;(t,\xi)\in\mathbb{Q}_p\times\mathbb{Q}_p^n.
\end{equation}

Suppose that the function $v(|t|_p,\xi)$, $(t,\xi)\in\mathbb{Q}_p\times\mathbb{Q}_p^n$ is a solution of the equation \eqref{3}. Then the function $u=\mathcal{F}_{\xi\to x}^{-1}v$ is the solution of \eqref{1}. Hence we will look for the solution of \eqref{3}.

We denote $\hat{u}(|t|_p,\xi)\coloneqq\mathcal{F}_{x\to\xi}u(|t|_p,\xi)$, $(t,\xi)\in\mathbb{Q}_p\times\mathbb{Q}_p$. The equation \eqref{3} is now equivalent to
\begin{equation}\Label{8}
D_{|t|_p}^{\alpha}\hat{u}(|t|_p,\xi)=|\xi|_p^{\beta}\hat{u}(|t|_p,\xi),\;(t,\xi)\in\mathbb{Q}_p\times\mathbb{Q}_p^n.
\end{equation}

The problem \eqref{8} is the radial eigenfunction problem of the operator $D_{|t|_p}^{\alpha}$. We denote $|\xi|_p=p^N$, $N\in\mathbb{Z}$. It follows from Proposition \ref{Pr1} that the solutions of the equation \eqref{8} have the form
\begin{equation}\Label{12.5}
\hat{u}_N(|t|_p,\xi)=\left\{
\begin{array}{ll}
C_N(\xi)p^{KN}(1-\dfrac{1}{p}),\;&|t|_p\leq p^{-KN};\\
-C_N(\xi)p^{KN-1},\;&|t|_p=p^{-KN+1};\\
0,\;&|t|_p\geq p^{-KN+2}.
\end{array}
\right.
\end{equation}

It follows from the initial condition \eqref{2} that
\begin{equation}\Label{13}
\hat{u}_N(0,\xi)=\hat{u}_0(\xi),\;\xi\in\mathbb{Q}_p^n.
\end{equation}
By substituting \eqref{12.5} into \eqref{13}, we get
$$\hat{u}_N(0,\xi)=C_N(\xi)p^{KN}(1-\frac{1}{p})=\hat{u}_0(\xi),\;\xi\in\mathbb{Q}_p^n.$$
Hence
$$C_N(\xi)=p^{-KN}(1-\frac{1}{p})^{-1}\hat{u}_0(\xi),\;\xi\in\mathbb{Q}_p^n.$$

Finally, we obtain the following solution of the equation \eqref{8}
\begin{equation}\Label{14}
\hat{u}_N(|t|_p,\xi)=
\left\{
\begin{array}{ll}
\hat{u}_0(\xi),\;&|t|_p\leq p^{-KN};\\
-\dfrac{1}{p-1}\hat{u}_0(\xi),\;&|t|_p=p^{-KN+1};\\
0,\;&|t|_p\geq p^{-KN+2},
\end{array}
\right.\;(t,\xi)\in\mathbb{Q}_p\times\mathbb{Q}_p^n,\;|\xi|_p=p^N.
\end{equation}

Denote
\begin{equation}\Label{15}
b_N(|t|_p,|\xi|_p)=
\left\{
\begin{array}{ll}
1,\;&|t|_p\leq p^{-KN};\\
-\dfrac{1}{p-1},\;&|t|_p=p^{-KN+1};\\
0,\;&|t|_p\geq p^{-KN+2},
\end{array}
\right.\;(t,\xi)\in\mathbb{Q}_p\times\mathbb{Q}_p^n,\;|\xi|_p=p^N.
\end{equation}
or, equivalently,
\begin{equation}\Label{15.5}
b_N(|t|_p,|\xi|_p)=
\left\{
\begin{array}{ll}
1,\;&|t|_p\leq |\xi|_p^{-K};\\
-\dfrac{1}{p-1},\;&|t|_p=p|\xi|_p^{-K};\\
0,\;&|t|_p\geq p^{2}|\xi|_p^{-K},
\end{array}
\right.\;(t,\xi)\in\mathbb{Q}_p\times\mathbb{Q}_p^n,\;|\xi|_p=p^N.
\end{equation}

Since we defined the functions $\widehat{u}_N$ and $b_N$ on each sphere $S_N^n$, we can define $\hat{u}$ and $b$ on the whole space $\mathbb{Q}_p^n$:
\begin{equation}\Label{16}
\hat{u}(|t|_p,\xi)\restriction_{S_N^n}=b_N(|t|_p,|\xi|_p)\hat{u}_0(\xi),\;t\in\mathbb{Q}_p,\xi\in S_N,
\end{equation}
\begin{equation}\Label{16.5}
b(|t|_p,|\xi|_p)\restriction_{S_N^n}=b_N(|t|_p,|\xi|_p),\;t\in\mathbb{Q}_p,\xi\in S_N.
\end{equation}

Since $\hat{u_0}\in\Psi(\mathbb{Q}_p)$, $\hat{u}(0)=0$, and it follows from local constancy of $\hat{u}$ that it vanishes on some neighbourhood of the origin. It follows from \eqref{16} that $\hat{u}\in\Psi(\mathbb{Q}_p)$ in $\xi$, so that $u\in\Phi(\mathbb{Q}_p)$ in $x$. In addition, the functions $u,\hat{u}$ belong to $\mathcal{D}(\mathbb{Q}_p)$ uniformly in the sense of support and local constancy with respect to $x$. Therefore, the interchanging of operations is permitted (in particular, the order of applying the Fourier transform $\mathcal{F}_{\xi\to x}$ and the operator $D_{|t|_p}^{\alpha}$ in the equation \eqref{3} can be reversed).

Using the inverse Fourier transform, we obtain the following solution of the equation \eqref{1}
\begin{equation}\Label{17}
u(|t|_p,x)=(\mathcal{F}_{\xi\to x}^{-1}\hat{u})(|t|_p,x)=\left((\mathcal{F}_{\xi\to x}^{-1}b)\ast u_0\right)(|t|_p,x),\;(t,x)\in\mathbb{Q}_p\times\mathbb{Q}_p^n,
\end{equation}
which belongs to $\Phi(\mathbb{Q}_p^n)$ with respect to $x$ for each $t\in\mathbb{Q}_p$ and satisfies the initial condition \eqref{2}.

It is clear from Proposition \ref{Pr1} and Proposition \ref{Pr2} that if the condition \eqref{2.5} does not hold, than the equation \eqref{1} has only a zero solution, and if $u_0$ is not identically zero, than the Cauchy problem \eqref{1}-\eqref{2} has no solutions.

\end{proof}

\section{Explicit Formula for the Solution}

Let us calculate the explicit expression for the function $(\mathcal{F}_{\xi\to x}^{-1}b)$. We have
\begin{equation}\Label{18'}
(\mathcal{F}_{\xi\to x}^{-1}b)(|t|_p,|x|_p)=\int\limits_{\mathbb{Q}_p^n}\chi_p(-\xi\cdot x)b(|t|_p,|\xi|_p)d\xi=\sum\limits_{j=-\infty}^{\infty}b(|t|_p,p^j)\int\limits_{S_j^n}\chi_p(-\xi\cdot x)d\xi.
\end{equation}

Denoting $|x|_p=p^M$ and using the formula \eqref{3.2}, we have
\begin{equation}\Label{18}
\begin{split}
&(\mathcal{F}_{\xi\to x}^{-1}b)(|t|_p,|x|_p)=(1-p^{-n})\sum\limits_{j=-\infty}^{-M}b(|t|_p,p^j)p^{jn}-b(|t|_p,p^{-M+1})p^{-Mn}=\\
&=(1-p^{-n})\sum\limits_{j=0}^{+\infty}b(|t|_p,p^{-j-M})p^{(-j-M)n}-b(|t|_p,p^{-M+1})p^{-Mn}=\\
&=(1-p^{-n})|x|_p^{-n}\sum\limits_{j=0}^{+\infty}p^{-jn}b(|t|_p,p^{-j}|x|_p^{-1})-|x|_p^{-n}b(|t|_p,p|x|_p^{-1}).
\end{split}
\end{equation}

Denoting $|t|_p=p^L$, $|x|_p=p^M$, $L,M\in\mathbb{Z}$, \eqref{18} is now equivalent to
\begin{equation}\Label{19}
\mathcal{F}_{\xi\to x}^{-1}b(p^L,p^M)=(1-p^{-n})p^{-Mn}\sum\limits_{j=0}^{+\infty}p^{-jn}b(p^L,p^{-M-j})
-p^{-Mn}b(p^L,p^{-M+1}).
\end{equation}
For $j\geq0$ from \eqref{15} we have
\begin{equation}\Label{20}
b(p^L,p^{-M-j})=b_{-M-j}(p^L,p^{-M-j})=
\left\{
\begin{array}{ll}
1,\;&p^L\leq p^{K(M+j)};\\
-\dfrac{1}{p-1},\;&p^L=p^{K(M+j)+1};\\
0,\;&p^L\geq p^{K(M+j)+2},
\end{array}
\right.
\end{equation}
or, equivalently,
\begin{equation}\Label{20'}
b(|t|_p,p^{-j}|x|_p^{-1})=b(|t|_p,|p^jx^{-1}|_p)=
\left\{
\begin{array}{ll}
1,\;&|t|_p\leq p^{jK}|x|_p^K;\\
-\dfrac{1}{p-1},\;&|t|_p=p^{jK+1}|x|_p^K;\\
0,\;&|t|_p\geq p^{jK+2}|x|_p^K,
\end{array}
\right.
\end{equation}

\begin{equation}\Label{21}
b(p^L,p^{-M+1})=b_{-M+1}(p^L,p^{-M+1})=
\left\{
\begin{array}{ll}
1,\;&p^L\leq p^{K(M-1)};\\
-\dfrac{1}{p-1},\;&p^L=p^{K(M-1)+1};\\
0,\;&p^L\geq p^{K(M-1)+2},
\end{array}
\right.
\end{equation}
or, equivalently,
\begin{equation}\Label{21'}
b(|t|_p,p|x|_p^{-1})=b(|t|_p,|p^{-1}x^{-1}|_p)=
\left\{
\begin{array}{ll}
1,\;&|t|_p\leq p^{-K}|x|_p^K;\\
-\dfrac{1}{p-1},&|t|_p=p^{-K+1}|x|_p^K;\\
0,\;&p^L\geq p^{-K+2}|x|_p^K.
\end{array}
\right.
\end{equation}

We consider the following cases.

\underline{Case 1}. $|t|_p\leq|x|_p^K$, or $\dfrac{L}{K}\leq M\Longleftrightarrow L\leq KM$. Then from \eqref{20} and \eqref{21} we have
\begin{equation}\Label{22}
\mathcal{F}_{\xi\to x}^{-1}b(p^L,p^M)=(1-p^{-n})p^{-nM}\sum\limits_{j=0}^{+\infty}p^{-j}-p^{-nM}b(p^L,p^{-M+1}).
\end{equation}

\textit{Case 1.a)}. $|t|_p\leq p^{-K}|x|_p^K$, or $L\leq K(M-1)$. Then
\begin{equation}\Label{23}
\mathcal{F}_{\xi\to x}^{-1}b(p^L,p^M)=(1-p^{-n})p^{-nM}\sum\limits_{j=0}^{+\infty}p^{-nj}-p^{-nM}=(1-p^{-n})p^{-nM}\frac{1}{1-p^{-n}}-p^{-nM}=0.
\end{equation}

\textit{Case 1.b)}. $|t|_p=p^{-K+1}|x|_p^K$, or $L=K(M-1)+1$. Then
\begin{equation}\Label{24}
\mathcal{F}_{\xi\to x}^{-1}b(p^L,p^M)=p^{-nM}-p^{-nM}(-\frac{1}{p-1})=p^{-nM}\frac{p}{p-1}=\frac{p^{-nM+1}}{p-1}=\frac{p}{p-1}|x|_p^{-n}.
\end{equation}

\textit{Case 1.c)}. $|t|_p\geq p^{-K+2}|x|_p^K$, or $L\geq K(M-1)+2$. Then
\begin{equation}\Label{25}
\mathcal{F}_{\xi\to x}^{-1}b(p^L,p^M)=p^{-nM}=|x|_p^{-n}.
\end{equation}

\underline{Case 2}. $|t|_p=p|x|_p^K$, or $\dfrac{L-1}{K}-M=0\Longleftrightarrow L=KM+1$. Then from \eqref{20} and \eqref{21} we have
\begin{equation}\Label{26}
\begin{split}
\mathcal{F}_{\xi\to x}^{-1}b(p^L,p^M)=(1-p^{-n})p^{-nM}\sum\limits_{j=1}^{+\infty}p^{-nj}+(1-p^{-n})p^{-nM}(-\frac{1}{p-1})=\\
=(1-p^{-n})p^{-nM}\frac{p^{-n}}{1-p^{-n}}-\frac{1-p^{-n}}{p-1}=p^{-nM}\frac{p^{-n+1}-1}{p-1}.
\end{split}
\end{equation}

\underline{Case 3}. $|t|_p\geq p^2|x|_p^K$, or $\dfrac{L-2}{K}-M\geq0\Longleftrightarrow L\geq KM+2$.

\textit{Case 3.a)} $\dfrac{L-1}{K}-M$ is an integer. Then
\begin{equation}\Label{27}
\begin{split}
\mathcal{F}_{\xi\to x}^{-1}b(p^L,p^M)=(1-p^{-n})p^{-nM}\sum\limits_{j=[\frac{L}{K}-M]}^{+\infty}p^{-nj}+(1-p^{-n})p^{-nM}p^{-n(\frac{L-1}{K}-M)}(-\frac{1}{p-1})=\\
=p^{-n[\frac{L}{K}]}-\frac{1-p^{-n}}{p-1}p^{-n\frac{L-1}{K}}.
\end{split}
\end{equation}

\textit{Case 3.b)} $\dfrac{L-1}{K}-M$ is not an integer. Then
\begin{equation}\Label{28}
\mathcal{F}_{\xi\to x}^{-1}b(p^L,p^M)=p^{-n[\frac{L}{K}]}.
\end{equation}

\section{Finite Domain of Dependence}

\begin{proposition}\Label{prH}
Suppose that the initial function $u_0$ is such that $u_0(t,x)=0$ outside some ball $B_N^n$, $N\in\mathbb{Z}$. Then the solution $u$ of the problem \eqref{1}-\eqref{2} satisfies $u(|t|_p,x)=0$ for $x\in\mathbb{Q}_p^n\setminus B_N^n$, and for all $|t|_p<p^L$ for some $L\in\mathbb{Z}$.
\end{proposition}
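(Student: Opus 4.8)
The plan is to read off the conclusion directly from the convolution representation \eqref{17} of the solution, using the ultrametric geometry to freeze the kernel on the support of the data. Writing $E(|t|_p,|z|_p):=(\mathcal{F}_{\xi\to x}^{-1}b)(|t|_p,|z|_p)$ for the radial kernel whose explicit values were obtained in Section~5, formula \eqref{17} reads
\begin{equation*}
u(|t|_p,x)=\int\limits_{\mathbb{Q}_p^n}E(|t|_p,|x-y|_p)\,u_0(y)\,dy,
\end{equation*}
and the integration is effectively restricted to $|y|_p\leq p^N$ since $u_0$ vanishes outside $B_N^n$. The first step is the key ultrametric observation: for $x\in\mathbb{Q}_p^n\setminus B_N^n$ one has $|x|_p\geq p^{N+1}>|y|_p$ for every $y$ in the support, so the strong triangle inequality gives $|x-y|_p=|x|_p$ throughout the domain of integration.

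The kernel is therefore constant in $y$ and factors out:
\begin{equation*}
u(|t|_p,x)=E(|t|_p,|x|_p)\int\limits_{\mathbb{Q}_p^n}u_0(y)\,dy.
\end{equation*}
From here there are two complementary ways to conclude. Because $u_0\in\Phi(\mathbb{Q}_p^n)$ the integral vanishes, so the solution is already identically zero outside $B_N^n$ for every $t$; independently of this cancellation, one invokes the explicit computation of Section~5, where Case~1.a (formula \eqref{23}) shows $E(|t|_p,|x|_p)=0$ exactly when $|t|_p\leq p^{-K}|x|_p^K$. The second step is to convert this pointwise vanishing into a uniform threshold in $t$: over all $x$ outside the ball the quantity $p^{-K}|x|_p^K$ is minimized at $|x|_p=p^{N+1}$, where it equals $p^{KN}$. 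Hence choosing $L=KN+1$ guarantees that $|t|_p<p^L$, i.e.\ $|t|_p\leq p^{KN}$, forces $|t|_p\leq p^{-K}|x|_p^K$ for every such $x$, whence $E(|t|_p,|x|_p)=0$ and $u(|t|_p,x)=0$.

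I expect the only genuine subtlety to be justifying that the kernel factors out as a pointwise value rather than as a distributional pairing: the inverse transform $\mathcal{F}_{\xi\to x}^{-1}b$ need not be an ordinary function near the origin, but the test function $z\mapsto u_0(x-z)$ appearing in the convolution is supported on $x+B_N^n$, which for $x\notin B_N^n$ lies inside the sphere $|z|_p=|x|_p$ and so stays bounded away from $0$; on that sphere $\mathcal{F}_{\xi\to x}^{-1}b$ coincides with the explicit locally constant function $E$ computed in Section~5, which legitimizes the factoring. The remaining work is routine bookkeeping of the strict-versus-nonstrict powers of $p$ to pin down $L$, together with the observation that the dependence of $E$ on $|x-y|_p$ alone is precisely what makes the localization clean.
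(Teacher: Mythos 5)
Your proposal is correct, and its second argument is essentially the paper's own proof: the paper likewise writes the solution via the convolution \eqref{17} restricted to $B_N^n$, uses the ultrametric equality $|x-y|_p=|x|_p$ valid when $|x|_p>p^N\geq|y|_p$, and then invokes Case~1.a \eqref{23} to make the kernel vanish for small $|t|_p$ (the paper takes $p^L=p^{NK-K}$, you take $L=KN+1$; both are admissible since the statement only asks for \emph{some} $L$). What you add is genuinely different: since the kernel is constant on the domain of integration, it factors out as $u(|t|_p,x)=\bigl(\mathcal{F}_{\xi\to x}^{-1}b\bigr)(|t|_p,|x|_p)\int u_0(y)\,dy$, and this integral vanishes because $u_0\in\Phi(\mathbb{Q}_p^n)$ has zero mean by the very definition of the Lizorkin space of the second kind. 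That shortcut yields a strictly stronger conclusion than the proposition claims --- $u(|t|_p,x)=0$ for all $x\notin B_N^n$ and \emph{all} $t$, with no time threshold --- so the statement is not sharp for Lizorkin data. The caveat is that this stronger conclusion leans entirely on the standing hypothesis $u_0\in\Phi(\mathbb{Q}_p^n)$ from Theorem \ref{th1} (needed anyway for \eqref{17} to define the solution); the Case-1.a route is the one that survives if one wants the finite-dependence property for compactly supported data without the zero-mean constraint, which is presumably why the paper states and proves it that way. Two small points: your ``exactly when'' for Case~1.a should be just ``when'' (for $n=1$ the kernel also vanishes in Case~2, since there $p^{-n+1}-1=0$), though only the sufficient direction is used; and your worry about the kernel being distributional near the origin is moot, because for fixed $t\neq0$ the symbol $b(|t|_p,\cdot)$ is a bounded, compactly supported, locally constant function, so $\mathcal{F}_{\xi\to x}^{-1}b$ is itself an ordinary test function --- though your workaround of restricting to the sphere $|z|_p=|x|_p$ is also valid.
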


\begin{proof}
From \eqref{17} we have
$$u(|t|_p,x)=\left((\mathcal{F}_{\xi\to x}^{-1}b)\ast u_0\right)(|t|_p,x)=\int\limits_{\mathbb{Q}_p^n}(\mathcal{F}_{\xi\to x}^{-1}b)(|t|_p,|x-y|_p)u_0(y)dy=$$
$$=\int\limits_{B_N^n}(\mathcal{F}_{\xi\to x}^{-1}b)(|t|_p,|x-y|_p)u_0(y)dy.$$
Suppose that $|x|_p>p^N$. Then for $y\in B_N^n$ by the triangle inequality (which is an equality in this case since $|x|_p\neq|y|_p$) we have $|x-y|_p=\max(|x|_p,|y|_p)=|x|_p$. Then $p^{-K}|x|_p^K>p^{-K}p^{NK}$. If we choose $L=p^{NK-K}$, then for all $t$, $|t|_p\leq p^L$ we have
$$|t|_p\leq p^L\leq p^{NK-K}=p^{-K}|x|_p^K.$$
Recalling the explicit formula for the function $\mathcal{F}_{\xi\to x}^{-1}b$, it follows from \eqref{23} that for such $x$ and $t$ $(\mathcal{F}_{\xi\to x}^{-1}b)(|t|_p,|x-y|_p)=0$, when $y\in B_N^n$. Hence $u(|t|_p,x)=0$.

\end{proof}

\section{Generalized Solutions and Uniqueness}

We will need the following property of the operator $D^{\alpha,n}$ of a distribution that corresponds to the bounded locally constant function, proved in \cite{K2008}.

\begin{lemma}[Lemma 1, \cite{K2008}]\Label{lemma2}
If $u$ is a bounded locally constant function on $\mathbb{Q}_p^n$, then the distribution $D^{\alpha,n}u\in\Phi(\mathbb{Q}_p^n)$ coincides with the function
\begin{equation}\Label{34}
(D^{\alpha,n}u)(x)=\dfrac{1-p^{\alpha}}{1-p^{-\alpha-n}}\int\limits_{\mathbb{Q}_p}|y|_p^{-\alpha-n}\left[u(x-y)-u(x)\right]dy.
\end{equation}
\end{lemma}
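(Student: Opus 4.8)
The plan is to establish \eqref{34} in two steps: first that the hypersingular integral on its right-hand side converges absolutely and defines a bounded locally constant function $J(x)$, and then that, viewed as an element of $\Phi'(\mathbb{Q}_p^n)$, this $J$ coincides with the distribution $D^{\alpha,n}u$ defined by duality. For the convergence, fix $x$ and let $\ell$ be an exponent of local constancy of $u$, so that $u(x-y)-u(x)=0$ whenever $|y|_p\leq p^{-\ell}$; this removes the non-integrable singularity of the kernel $|y|_p^{-\alpha-n}$ at the origin. On the region $|y|_p\geq p^{-\ell+1}$ I would bound $|u(x-y)-u(x)|\leq2\sup_z|u(z)|$ and use \eqref{0-44} to obtain
\begin{equation}
\int\limits_{|y|_p\geq p^{-\ell+1}}|y|_p^{-\alpha-n}\,dy=(1-p^{-n})\sum_{k\geq-\ell+1}p^{-k\alpha}<\infty,
\end{equation}
the series converging because $\alpha>0$. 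Hence $J$ is well defined and bounded; it is locally constant because $u$ is and the integral is translation invariant, and in particular it is locally integrable and defines a regular distribution.

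To identify $J$ with $D^{\alpha,n}u$, I would use that the symbol $|\xi|_p^{\alpha}$ is real and even, so $D^{\alpha,n}$ is symmetric with respect to the bilinear pairing and acts on $u\in\Phi'(\mathbb{Q}_p^n)$ by $(D^{\alpha,n}u,\varphi)=(u,D^{\alpha,n}\varphi)$, $\varphi\in\Phi(\mathbb{Q}_p^n)$. Starting from $(J,\varphi)=\int J(x)\varphi(x)\,dx$, written as a double integral of $|y|_p^{-\alpha-n}[u(x-y)-u(x)]\varphi(x)$ with the constant $\tfrac{1-p^{\alpha}}{1-p^{-\alpha-n}}$ carried along, I would integrate in $x$ first; for each fixed $y$ the two resulting integrals are separately finite since $\varphi\in L^1$ and $u$ is bounded, so the translation $x\mapsto x+y$ gives
\begin{equation}
\int\limits_{\mathbb{Q}_p^n}[u(x-y)-u(x)]\varphi(x)\,dx=\int\limits_{\mathbb{Q}_p^n}u(x)[\varphi(x+y)-\varphi(x)]\,dx .
\end{equation}
Replacing $y$ by $-y$ and integrating back in $y$ then yields $(J,\varphi)=(u,D^{\alpha,n}\varphi)$, the inner $y$-integral being precisely the hypersingular representation of $D^{\alpha,n}\varphi$ valid for $\varphi\in\mathcal{D}(\mathbb{Q}_p^n)$. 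This gives $J=D^{\alpha,n}u$ in $\Phi'(\mathbb{Q}_p^n)$.

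The main obstacle is the justification of the two interchanges of the order of integration, since the kernel $|y|_p^{-\alpha-n}$ is \emph{not} integrable near the origin and the difference $u(x-y)-u(x)$ cannot be split globally into two separately convergent integrals. The decisive observation is that each interchange is controlled by a \emph{different} local-constancy property: in the form $\int\!\!\int|y|_p^{-\alpha-n}[u(x-y)-u(x)]\varphi(x)\,dy\,dx$ the region $|y|_p\leq p^{-\ell}$ contributes nothing by the local constancy of $u$, whereas in $\int\!\!\int|y|_p^{-\alpha-n}u(x)[\varphi(x-y)-\varphi(x)]\,dy\,dx$ it contributes nothing by the local constancy of $\varphi$. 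In both cases the remaining region $|y|_p$ bounded below, together with the compact support of $\varphi$ (say in $B_R^n$), yields the absolutely convergent majorant $2\sup|u|\cdot\|\varphi\|_{L^1}\int_{|y|_p\geq p^{-\ell}}|y|_p^{-\alpha-n}\,dy<\infty$, which legitimizes Fubini's theorem and the translations, and simultaneously shows that the distribution $D^{\alpha,n}u$ is regular and represented by $J$.
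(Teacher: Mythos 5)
The paper contains no proof of this lemma to compare against: it is imported verbatim as Lemma 1 of \cite{K2008} (the preceding sentence says ``proved in \cite{K2008}''), so any self-contained argument is by definition going beyond what the paper does. Judged on its own, your proof is correct, and it is the natural (and, structurally, the standard) argument: define the candidate function $J$ by the hypersingular integral, establish absolute convergence from the fact that $u(x-y)-u(x)=0$ for $|y|_p\le p^{-\ell}$, and then identify $J$ with $D^{\alpha,n}u$ through the duality $(D^{\alpha,n}u,\varphi)=(u,D^{\alpha,n}\varphi)$, two applications of Fubini, and translation invariance of the Haar measure. You also correctly isolate the genuine crux, which a careless writer would gloss over: the two interchanges of integration are justified by two \emph{different} local-constancy scales --- that of $u$ in one order of integration, that of $\varphi$ in the other --- each of which removes the non-integrable part of the kernel $|y|_p^{-\alpha-n}$ near $y=0$. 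Two points deserve to be made explicit to fully close the argument: (i) boundedness of $J$ requires the exponent $\ell$ to be uniform in $x$, which holds because the paper's definition of local constancy postulates a single $\ell$ valid for all $x$; (ii) the symmetry $(D^{\alpha,n}u,\varphi)=(u,D^{\alpha,n}\varphi)$ that you take as the definition on $\Phi'(\mathbb{Q}_p^n)$ should be reconciled with the Fourier definition $D^{\alpha,n}u=\mathcal{F}^{-1}\big[|\xi|_p^{\alpha}\mathcal{F}u\big]$; this is a one-line computation using the evenness of $|\xi|_p^{\alpha}$ and $(\mathcal{F}^{-1}\varphi)(\xi)=(\mathcal{F}\varphi)(-\xi)$, together with the invariance of $\Phi(\mathbb{Q}_p^n)$ under $D^{\alpha,n}$, which makes the pairing $(u,D^{\alpha,n}\varphi)$ meaningful. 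Incidentally, your reading silently corrects two typos in the statement as printed: the integral in \eqref{34} should be over $\mathbb{Q}_p^n$, not $\mathbb{Q}_p$, and $D^{\alpha,n}u$ lies in $\Phi'(\mathbb{Q}_p^n)$, not $\Phi(\mathbb{Q}_p^n)$.
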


We consider the problem \eqref{1}-\eqref{2} in the class of generalized functions, radial in $t$.

Denote by $\Phi'(\mathbb{Q}_p,\Phi'(\mathbb{Q}_p^n))$ the set of distributions over the test function space $\Phi(\mathbb{Q}_p)$, with values in $\Phi'(\mathbb{Q}_p^n)$.

\begin{theorem}\Label{th2}
Let $u\in\Phi'(\mathbb{Q}_p,\Phi'(\mathbb{Q}_p^n))$ be a generalized solution of the equation \eqref{1}, that is
$$\langle\langle u,D_t^{\alpha}\varphi_1\rangle,\varphi_2\rangle=\langle\langle u,\varphi_1\rangle,D_x^{\beta,n}\varphi_2\rangle,$$
for any $\varphi_1\in\Phi(\mathbb{Q}_p)$, $\varphi_2\in\Phi(\mathbb{Q}_p')$. If $u$ is radial in $t$, then $u\in\mathcal{D}(\mathbb{Q}_p,\Phi'(\mathbb{Q}_p^n))$. If, in addition, $u(0,x)=0$, then $u(t,x)\equiv0$.
\end{theorem}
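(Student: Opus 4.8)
The plan is to carry out, in the fully distributional setting, the Fourier-analytic scheme used to prove Theorem \ref{th1}, exploiting the radiality in $t$ so that Proposition \ref{Pr2} can be applied one sphere at a time. First I would apply the Fourier transform in $x$ only, setting $\hat u=\mathcal{F}_{x\to\xi}u$; since $\mathcal{F}$ is an isomorphism of $\Phi'(\mathbb{Q}_p^n)$ onto $\Psi'(\mathbb{Q}_p^n)$ and acts solely in $x$, the object $\hat u$ lies in $\Phi'(\mathbb{Q}_p,\Psi'(\mathbb{Q}_p^n))$ and remains radial in $t$. Using the intertwining $\mathcal{F}_{x\to\xi}D_x^{\beta,n}=|\xi|_p^{\beta}\mathcal{F}_{x\to\xi}$ (the weak counterpart of the passage from \eqref{3} to \eqref{8}) and the fact that $|\xi|_p^{\beta}\psi\in\Psi(\mathbb{Q}_p^n)$ whenever $\psi\in\Psi(\mathbb{Q}_p^n)$, the hypothesis becomes
$$\langle\langle\hat u,D_t^{\alpha}\varphi_1\rangle,\psi\rangle=\langle\langle\hat u,\varphi_1\rangle,|\xi|_p^{\beta}\psi\rangle,\quad\varphi_1\in\Phi(\mathbb{Q}_p),\ \psi\in\Psi(\mathbb{Q}_p^n).$$

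Next I would localize in $\xi$. Every $\psi\in\Psi(\mathbb{Q}_p^n)$ has compact support and vanishes in a neighbourhood of the origin, hence is supported on finitely many spheres $S_N^n$, and on each of them $|\xi|_p^{\beta}=p^{\beta N}$ is constant. Fix $N$ and a $\psi$ supported on $S_N^n$, and define $w_\psi\in\Phi'(\mathbb{Q}_p)$ by $\langle w_\psi,\varphi_1\rangle:=\langle\langle\hat u,\varphi_1\rangle,\psi\rangle$. Then $w_\psi$ is radial in $t$, and the transformed equation gives $\langle w_\psi,D_t^{\alpha}\varphi_1\rangle=p^{\beta N}\langle w_\psi,\varphi_1\rangle$ for all $\varphi_1$, i.e.\ $D_t^{\alpha}w_\psi=p^{\beta N}w_\psi$ in $\Phi'(\mathbb{Q}_p)$. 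This is precisely the eigenvalue problem \eqref{3.1'} with $\lambda=p^{\beta N}$, so Proposition \ref{Pr2} forces $w_\psi$ to coincide with a scalar multiple of the profile \eqref{3.12}; that scalar is a continuous linear functional of $\psi$, defining some $C_N\in\Psi'(S_N^n)$, so that $\hat u$ restricted to $S_N^n$ equals $C_N(\xi)$ times the fixed locally constant, compactly supported function of $t$ given by \eqref{3.12} with $C_N=1$.

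Because that profile lies in $\Phi(\mathbb{Q}_p)\subset\mathcal{D}(\mathbb{Q}_p)$ and a general $\psi$ meets only finitely many spheres, $w_\psi$, viewed as a function of $t$, is a finite combination of such profiles and hence belongs to $\mathcal{D}(\mathbb{Q}_p)$; since the $\xi$-support of $\psi$ is compact and bounded away from $0$, only finitely many $N$ occur, yielding a common ball and a common exponent of local constancy in $t$. This gives $\hat u\in\mathcal{D}(\mathbb{Q}_p,\Psi'(\mathbb{Q}_p^n))$, and applying $\mathcal{F}_{\xi\to x}^{-1}$, which acts only in $\xi$, yields $u\in\mathcal{D}(\mathbb{Q}_p,\Phi'(\mathbb{Q}_p^n))$, the first assertion. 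In particular $u$ is now a genuine $\mathcal{D}(\mathbb{Q}_p)$-valued function of $t$, so $u(0,x)$---equivalently $\hat u(0,\xi)$---is well defined; evaluating the sphere representation at $t=0$, where $|t|_p=0\le p^{-KN}$, gives $\hat u(0,\xi)\restriction_{S_N^n}=C_N(\xi)\,p^{KN}(1-\tfrac1p)$. The hypothesis $u(0,x)=0$ then forces every $C_N$ to vanish, whence $\hat u\equiv0$ and consequently $u(t,x)\equiv0$.

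The step I expect to be the main obstacle is upgrading the pointwise-in-$\psi$ information into genuine membership in $\mathcal{D}(\mathbb{Q}_p,\Phi'(\mathbb{Q}_p^n))$: one has to verify that the sphere-wise identities assemble into a single element of $\Psi'(\mathbb{Q}_p^n)$ for each $t$, that the map $\psi\mapsto w_\psi$ is continuous so that each $C_N$ is a bona fide distribution, and above all that the support in $t$ and the exponent of local constancy can be chosen uniformly over bounded families of test functions. Throughout, the radiality in $t$ is indispensable, since it is exactly what allows Proposition \ref{Pr2} to be invoked on each sphere; dropping it would enlarge the solution space of the eigenvalue equation and break the argument.
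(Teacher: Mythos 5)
Your proposal is correct and follows essentially the same route as the paper's own proof: Fourier transform in $x$, testing $\hat u$ against $\psi\in\Psi(\mathbb{Q}_p^n)$ supported on a single sphere $S_N^n$ to reduce to the radial eigenvalue problem, invoking Proposition \ref{Pr2} to identify the $t$-profile \eqref{3.12}, using the finite-sphere decomposition of a general $\psi$ to get membership in $\mathcal{D}(\mathbb{Q}_p)$ in $t$, and evaluating at $t=0$ to kill the constants. The uniformity concern you flag at the end is real but is passed over silently in the paper's proof as well, so nothing in your argument diverges from theirs.
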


\begin{proof}
Denote by $\widehat{u}(t,\cdot)$ the Fourier transform of $u$ in the variable $x$. For any $\psi\in\Psi(\mathbb{Q}_p^n)$ we have
$$D_{|t|_p}^{\alpha}\langle\widehat{u}(t,\cdot),\psi\rangle=\langle|\xi|_p^{\beta}\widehat{u}(t,\xi),\psi(\xi)\rangle.$$
If $\supp\psi\subset S_N$, $N\in\mathbb{N}$, then
$$D_{|t|_p}^{\alpha}\langle\widehat{u}(t,\cdot),\psi\rangle=p^{\beta N}\langle\widehat{u}(t,\cdot),\psi(\cdot)\rangle.$$

By Proposition \ref{Pr2}, the function $\langle \widehat{u}(t,\cdot),\psi\rangle$ has the form \eqref{3.12} for some $c\in\mathbb{C}$. If $\psi\in\Psi(\mathbb{Q}_p^n)$, then $\psi$ is a sum of a finite number of functions supported on spheres $S_N^n$. Taking, in particular, $\psi=\widehat{\varphi}$, $\varphi\in\Phi(\mathbb{Q}_p^n)$, we find that $\langle u(t,\cdot),\varphi\rangle$ belongs to $\mathcal{D}(\mathbb{Q}_p)$ in the variable $t$, for any $\varphi\in\Phi(\mathbb{Q}_p^n)$.

If $u(0,\cdot)=0$, then also $\widehat{u}(0,\cdot)=0$. If $\psi\in\Psi(\mathbb{Q}_p^n)$, $\supp\psi\subset S_N^n$, then $\langle\widehat{u}(t,\cdot),\psi\rangle$ has the form \eqref{3.2}, and the assumption $\widehat{u}(0,\cdot)=0$ implies the equality $c=0$, hence $\langle\widehat{u}(t,\cdot),\psi\rangle=0$, and $\widehat{u}(t,\cdot)=0$ (since $\psi$ and $N$ are arbitrary), and $u(t,\cdot)=0$.
\end{proof}

It follows from Lemma \ref{lemma2} that locally bounded constant solutions of the equation \eqref{1} are generalized solutions of the class considered in Theorem \ref{th2}. Therefore the solutions of the Cauchy problems constructed in Theorem \ref{th1} are unique in the class of radial in $t$, bounded locally constant functions.

\section{$L^1$-Estimate of the Solution}

Denote $\mathbb{Q}_p^+\coloneqq\{t\in\mathbb{Q}_p|\;|t|_p\geq1\}$.

\begin{theorem}\Label{th3}
Suppose that the conditions of Theorem \ref{th1} hold. Then the solution of the problem \eqref{1}-\eqref{2}, defined in \eqref{17}, satisfies the following estimate in $L^1(\mathbb{Q}_p^n)$ in variable $x$ for each $t\in\mathbb{Q}_p^+$
 \begin{equation}\Label{31.5}
\norm{u(|t|_p,\cdot)}_{L_1(\mathbb{Q}_p^n)}\leq p^{2n\gamma}\norm{u_0}_{L_1(\mathbb{Q}_p^n)},
\end{equation}
where $\gamma\geq\dfrac{2}{K}$ is a positive constant.
\end{theorem}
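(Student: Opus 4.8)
The plan is to exploit the convolution representation \eqref{17} of the solution together with Young's inequality for convolutions on the additive group $\mathbb{Q}_p^n$. Abbreviating the kernel by $G(|t|_p,\cdot)\coloneqq(\mathcal{F}_{\xi\to x}^{-1}b)(|t|_p,\cdot)$, formula \eqref{17} reads $u(|t|_p,\cdot)=G(|t|_p,\cdot)\ast u_0$; hence, by Fubini's theorem and the translation invariance of the Haar measure,
\[
\norm{u(|t|_p,\cdot)}_{L_1(\mathbb{Q}_p^n)}\le\int_{\mathbb{Q}_p^n}|u_0(y)|\left(\int_{\mathbb{Q}_p^n}\left|G(|t|_p,|x-y|_p)\right|dx\right)dy=\norm{G(|t|_p,\cdot)}_{L_1(\mathbb{Q}_p^n)}\norm{u_0}_{L_1(\mathbb{Q}_p^n)}.
\]
The whole statement therefore reduces to the uniform (in $t\in\mathbb{Q}_p^+$) kernel bound $\norm{G(|t|_p,\cdot)}_{L_1(\mathbb{Q}_p^n)}\le p^{2n\gamma}$.

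To estimate the kernel norm I would decompose $\mathbb{Q}_p^n$ into the spheres $S_M^n$, on each of which $G$ depends only on $|x|_p=p^M$, and use $\int_{S_M^n}dx=(1-p^{-n})p^{nM}$ from \eqref{0-44}. Writing $|t|_p=p^L$ with $L\ge0$, this gives
\[
\norm{G(|t|_p,\cdot)}_{L_1(\mathbb{Q}_p^n)}=(1-p^{-n})\sum_{M=-\infty}^{+\infty}\left|G(p^L,p^M)\right|p^{nM},
\]
into which I would substitute the explicit values of $G(p^L,p^M)$ computed in the case analysis \eqref{23}--\eqref{28}. Three structural features drive the estimate. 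First, by Case 1.a (formula \eqref{23}) we have $G(p^L,p^M)=0$ whenever $L\le K(M-1)$, i.e. for all $M\ge L/K+1$; thus the series is truncated from above near $M\approx\lceil L/K\rceil$ and the large-$|x|_p$ spheres drop out. Second, on the bulk region $M\le(L-2)/K$ (Case 3, formulas \eqref{27}--\eqref{28}) the value $G(p^L,p^M)$ is independent of $M$ and comparable to $p^{-n\lfloor L/K\rfloor}$. Third, the transition region, governed by Cases 1.b, 1.c and 2 (formulas \eqref{24}, \eqref{25}, \eqref{26}), consists of only a bounded number of spheres.

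I would then sum region by region. On the bulk region the geometric series $\sum_{M\le(L-2)/K}p^{nM}$ equals, up to the factor $(1-p^{-n})^{-1}$, its top term $p^{n\lfloor(L-2)/K\rfloor}$; multiplying by the constant kernel value $\approx p^{-n\lfloor L/K\rfloor}$ produces $p^{n(\lfloor(L-2)/K\rfloor-\lfloor L/K\rfloor)}$, which is bounded because the two floors differ by a bounded quantity. Each transition sphere, after multiplying $|G(p^L,p^M)|=O(p^{-nM})$ by $(1-p^{-n})p^{nM}$, contributes an amount $O(1)$ independent of $L$; and these spheres range over an exponent window $M\in[(L-2)/K,\,L/K]$, whose length $2/K$ is what dictates the exponent $\gamma\ge2/K$ once all numerical factors are collected into a single constant $p^{2n\gamma}$. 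This yields \eqref{31.5}.

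I expect the main obstacle to be the purely arithmetic bookkeeping forced by the floor functions $\lfloor L/K\rfloor$, $\lfloor(L-2)/K\rfloor$ and by the residue condition $L\equiv1\pmod K$, which is what activates Cases 1.b, 2 and 3.a and separates them from Cases 1.c and 3.b: one must verify that, across all these subcases, the accumulated constant is genuinely independent of $t$. The hypothesis $t\in\mathbb{Q}_p^+$, i.e. $L\ge0$, enters precisely here, guaranteeing $\lfloor L/K\rfloor\ge0$ and hence $|G|\le1$ on the bulk region, which is what keeps the final constant uniformly bounded.
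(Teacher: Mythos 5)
Your proposal is correct, and at the decisive step it takes a genuinely different route from the paper. Both proofs begin identically: Young's inequality reduces everything to the kernel bound $\norm{(\mathcal{F}_{\xi\to x}^{-1}b)(|t|_p,\cdot)}_{L_1(\mathbb{Q}_p^n)}\leq p^{2n\gamma}$. The paper then argues on the frequency side: for $|t|_p\geq1$ and integer $\gamma\geq2/K$ the function $\xi\mapsto b(|t|_p,|\xi|_p)$ is supported in $B_\gamma^n$, the region $x\in B_\gamma^n$ is estimated trivially by the volume factor $p^{2n\gamma}$, and the region $x\notin B_\gamma^n$ is claimed to contribute zero by ``sandwiching'' $\int_{B_\gamma^n}\chi_p(-x\cdot\xi)\,b\,d\xi$ between multiples of $\int_{B_\gamma^n}\chi_p(-x\cdot\xi)\,d\xi$. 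You instead compute the kernel's $L^1$ norm directly in physical space from the explicit case formulas \eqref{23}--\eqref{28}, summing over the spheres $S_M^n$. Your route costs more arithmetic but buys two real advantages. First, it yields a bound by an absolute constant (a careful tally gives at most $\frac{p+1}{p-1}\leq 3$), uniform over \emph{all} $t\neq0$, which is stronger than $p^{2n\gamma}$. Second, it avoids the paper's sandwich step, which is not legitimate: these are complex oscillatory integrals, so pointwise comparison of integrands says nothing, and the conclusion drawn from it is in fact false. Indeed, take $K=2$ and $|t|_p=p^{2m}$ with $m>\gamma$; then \eqref{15.5} gives $b(|t|_p,|\xi|_p)=\mathbf{1}_{\{|\xi|_p\leq p^{-m}\}}$, so by \eqref{3.1} the kernel equals $p^{-mn}$ on $|x|_p\leq p^{m}$ and $0$ beyond; it does not vanish on $\mathbb{Q}_p^n\setminus B_\gamma^n$, and that outer region carries almost all of the $L^1$ mass, namely $1-p^{(\gamma-m)n}$. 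The theorem itself survives (here $\norm{G}_{L_1}=1$), but only through a computation of your type, not through the claimed vanishing.

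Three repairs are needed when you write your sketch out in full. (1) The bracket $[\,\cdot\,]$ in \eqref{27}--\eqref{28} must be read as the ceiling $\lceil L/K\rceil$, not the floor: with the floor reading, \eqref{27} contradicts a direct computation (for $p=2$, $n=1$, $K=2$, $L=3$, $M=0$ the kernel is $0$, not $2^{-2}$). Your ``comparable to'' statements survive, since the two differ by a bounded factor. (2) Your heuristic that the window length $2/K$ ``dictates'' the exponent $2n\gamma$ is not how the constant actually arises; what you must verify at the end is that the accumulated constant does not exceed $p^{2n\gamma}$, and since $\gamma$ is a positive \emph{integer} this follows from $(\text{constant})\leq\frac{p+1}{p-1}\leq3<4\leq p^{2n}$. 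This tally needs a little care: Cases 1.b and 2 occur only when $L\equiv1\pmod K$, in which case Case 1.c is empty, and vice versa; a fully crude count of ``$O(1)$ per transition sphere'' can exceed $4$ and would then fail against $p^{2n}$ when $p=2$, $n=1$. (3) Your closing remark about where the hypothesis $t\in\mathbb{Q}_p^+$ enters is inaccurate: your own bulk estimate $p^{n(\lfloor(L-2)/K\rfloor-\lceil L/K\rceil)}\leq1$ requires no sign condition on $L$, so your argument proves the kernel bound for every $t\neq0$. The restriction $|t|_p\geq1$ is what the \emph{paper's} frequency-support argument needs (it forces $\supp b\subset B_\gamma^n$ with $\gamma$ fixed independently of $t$); your proof simply does not use it.
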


\begin{proof}
For each $t\in\mathbb{Q}_p^+$ we have
\begin{equation}\Label{30}
\begin{split}
&\norm{\mathcal{F}_{\xi\to x}^{-1}[b(|t|_p,|\xi|_p)](|t|_p,\cdot)}_{L_1(\mathbb{Q}_p^n)}=\int\limits_{\mathbb{Q}_p^n}\abs*{\mathcal{F}_{\xi\to x}^{-1}[b(|t|_p,|\xi|_p))](|t|_p,\cdot)}dx=\\
&=\int\limits_{\mathbb{Q}_p^n}\abs*{\int\limits_{\mathbb{Q}_p^n}\chi_p(-x\cdot\xi)b(|t|_p,|\xi|_p))d\xi}dx=\\
&=\int\limits_{\mathbb{Q}_p^n}\abs*{\int\limits_{B_{\gamma}^n}\chi_p(-x\cdot\xi)b(|t|_p,|\xi|_p))d\xi+\int\limits_{\mathbb{Q}_p^n\setminus B_{\gamma}^n}\chi_p(-x\xi)b(|t|_p,|\xi|_p))d\xi}dx\leq\\
&\leq\int\limits_{\mathbb{Q}_p^n}\abs*{\int\limits_{B_{\gamma}^n}\chi_p(-x\cdot\xi)b(|t|_p,|\xi|_p))d\xi}dx+\int\limits_{\mathbb{Q}_p^n}\abs*{\int\limits_{\mathbb{Q}_p^n\setminus B_{\gamma}^n}\chi_p(-x\cdot\xi)b(|t|_p,|\xi|_p)d\xi}dx,
\end{split}
\end{equation}
where $\gamma\in\mathbb{Z}$ will be chosen later.

For the integral $\int\limits_{\mathbb{Q}_p^n\setminus B_{\gamma}^n}\chi_p(-x\cdot\xi)b(|t|_p,|\xi|_p)d\xi$ we have
$$\int\limits_{\mathbb{Q}_p^n\setminus B_{\gamma}^n}\chi_p(-x\cdot\xi)b(|t|_p,|\xi|_p)d\xi=\sum\limits_{N=\gamma+1}^{+\infty}\int\limits_{S_N^n}\chi_p(-x\cdot\xi)b(|t|_p,|\xi|_p)d\xi.$$

It follows from \eqref{15.5} that $b(|t|_p,|\xi|_p)=0$, if $|\xi|_p^K\geq\dfrac{p^2}{|t|_p}$. We suppose that $|t|_p=p^L$, $L\geq0$, then $|t|_p\geq1$. Thus if $\gamma\geq\frac{2}{K}$, for $\xi\in S_N^n$, $N\geq\gamma+1$, we get $|\xi|_p\geq p^{\gamma+1}=p^{\frac{2}{K}+1}\geq p^{\frac{2}{K}}$, then $|\xi|_p^K\geq p^2\geq\dfrac{p^2}{|t|_p}$, and therefore $\int\limits_{S_N^n}\chi_p(-x\cdot\xi)b(|t|_p,|\xi|_p)d\xi=0$. In conclusion, for the chosen $\gamma$ we have
$$\int\limits_{\mathbb{Q}_p^n}\abs*{\int\limits_{\mathbb{Q}_p^n\setminus B_{\gamma}^n}\chi_p(-x\cdot\xi)b(|t|_p,|\xi|_p)d\xi}dx=0.$$

Now for the first integral in \eqref{30} we have
$$\int\limits_{\mathbb{Q}_p^n}\abs*{\int\limits_{B_{\gamma}^n}\chi_p(-x\cdot\xi)b(|t|_p,|\xi|_p)d\xi}dx=\int\limits_{B_{\gamma}^n}\abs*{\int\limits_{B_{\gamma}^n}\chi_p(-x\cdot\xi)b(|t|_p,|\xi|_p)d\xi}dx+$$
$$+\int\limits_{\mathbb{Q}_p^n\setminus B_{\gamma}^n}\abs*{\int\limits_{B_{\gamma}^n}\chi_p(-x\cdot\xi)b(|t|_p,|\xi|_p)d\xi}dx,$$
where
$$\int\limits_{B_{\gamma}^n}\abs*{\int\limits_{B_{\gamma}^n}\chi_p(-x\cdot\xi)b(|t|_p,|\xi|_p)d\xi}dx\leq\int\limits_{B_{\gamma}^n}\int\limits_{B_{\gamma}^n}|\chi_p(-x\cdot\xi)|\cdot|b(|t|_p,|\xi|_p)|d\xi dx\leq\int\limits_{B_{\gamma}^n}\int\limits_{B_{\gamma}^n}d\xi dx=p^{2n\gamma},$$
since
$|\chi_p(-x\cdot\xi)|=1$ for all $x,\xi\in\mathbb{Q}_p^n$ by definition of the character $\chi_p$, and $|b(|t|_p,|\xi|_p)|\leq1$ for all $(t,\xi)\in\mathbb{Q}_p^+\times\mathbb{Q}_p^n$ by definition of the function $b$ in \eqref{15.5}.

Finally, we have from \eqref{3.1}
$$\int\limits_{B_{\gamma}^n}\chi_p(-x\cdot\xi)b(|t|_p,|\xi|_p)d\xi\leq\int\limits_{ B_{\gamma}^n}\chi_p(-x\cdot\xi)d\xi=
\left\{
\begin{array}{ll}
p^{n\gamma},\;&|x|_p\leq p^{-\gamma};\\
0,\;&|x|_p\geq p^{-\gamma+1},
\end{array}
\right.$$
and
$$\int\limits_{B_{\gamma}^n}\chi_p(-x\cdot\xi)b(|t|_p,|\xi|_p)d\xi\geq-\frac{1}{p-1}\int\limits_{B_{\gamma}^n}\chi_p(-x\cdot\xi)d\xi=
\left\{
\begin{array}{ll}
-\dfrac{1}{p-1}p^{n\gamma},\;&|x|_p\leq p^{-\gamma};\\
0,\;&|x|_p\geq p^{-\gamma+1}.
\end{array}
\right.$$
Since $\gamma\geq\dfrac{2}{K}$ is positive, in the outer integral we have $|x|_p\geq p^{\gamma+1}\geq p^{-\gamma+1}$, so we get
$$\int\limits_{\mathbb{Q}_p^n\setminus B_{\gamma}^n}\abs*{\int\limits_{B_{\gamma}^n}\chi_p(-x\cdot\xi)b(|t|_p,|\xi|_p)d\xi}dx=0.$$

Finally, combining the estimates for all integrals in \eqref{30}, we obtain
\begin{equation}\Label{31}
\norm{\mathcal{F}_{\xi\to x}^{-1}[b(|t|_p,|\xi|_p)](|t|_p,\cdot)}_{L_1(\mathbb{Q}_p^n)}\leq p^{2n\gamma}.
\end{equation}
The inequality for convolutions gives us the estimate for the norm of the solution
\begin{equation}\Label{31}
\norm{u(|t|_p,\cdot)}_{L_1(\mathbb{Q}_p^n)}\leq p^{2n\gamma}\norm{u_0}_{L_1(\mathbb{Q}_p^n)}.
\end{equation}
\end{proof}

\section*{Acknowledgement}
The authors acknowledge the financial support by the National Research Foundation of Ukraine (Project number 2023.03/0002).


\begin{thebibliography}{}

\bibitem{A}\Label{A}
S. Albeverio, A. Yu. Khrennikov, and V. M. Shelkovich, \textit{Theory of p-Adic Distributions}, Cambridge University Press, 2010.

\bibitem{G-C}\Label{G-C}
J. Galeano-Peñaloza, O. F. Casas-Sánchez, Radial solutions of a pseudo-differential equation associated with a $p$-adic non-local ultradiffusion operator, \textit{J. Pseudo-Differ. Oper. Appl.}, \textbf{15(2)} (2024).

\bibitem{K2008}\Label{K2008}
A.N. Kochubei, A Non-Archimedean Wave Equation, \textit{Pacif. J. Math}, \textbf{235(2)} (2008), 245-261.

\bibitem{K2014}\Label{K2014}
A.N. Kochubei, Radial Solutions of non-Archimedean Pseudo-differential Equations, \textit{Pacif. J. Math.}, \textbf{269} (2014), 355-369.

\bibitem{K2001}\Label{K2001}
A. N. Kochubei, \textit{Pseudo-Differential Equations and Stochastics over Non-Archimedean Fields}, Marcel Dekker, New York, 2001.

\bibitem{K2023}\Label{K2023}
A.N. Kochubei, The Vladimirov-Taibleson operator: inequalities, Dirichlet problem, boundary Hölder regularity, \textit{J. Pseudo-Differ. Oper. Appl.}, \textbf{14} (2023), No.2, Paper 31, 27 pp.

\bibitem{VVZ}\Label{VVZ}
V. S. Vladimirov, I. V. Volovich and E. I. Zelenov, \textit{p-Adic Analysis and Mathematical
Physics}, World Scientific, Singapore, 1994.

\bibitem{Z}\Label{Z}
W.A. Zúñiga-Galindo, \textit{Pseudodifferential equations over non-Archimedean spaces}, Lecture Notes Math 2174, 2016.

\end{thebibliography}
\end{document}